\title{The Universal Covering Space of a Haken $n$--Manifold}
\author{Bell Foozwell}
\address{Trinity College\\ Royal Parade\\ Parkville\\ Vic 3010\\ Australia}
\email{bfoozwel@trinity.unimelb.edu.au}
\urladdr{https://sites.google.com/site/bellfoozwell/}
\newtheorem{theorem}{Theorem}[section]    
\newtheorem{lemma}[theorem]{Lemma}          
\theoremstyle{definition}
\newtheorem{definition}[theorem]{Definition}    
\newtheorem*{question}{Question}
\newcommand{\ul}[1]{\ensuremath{\underline{\underline{#1}}}}
\newcommand{\cov}[1]{\ensuremath{\widetilde{#1}}}
\newcommand{\bd}{\ensuremath{\partial}}
\def \co {\colon\thinspace}
\def \RR {\mathbf{R}}
\def \HH {\mathbf{H}}
\DeclareMathOperator{\inte}{Interior}
\DeclareMathOperator{\closure}{Closure}
\begin{document}

\begin{abstract}
We define the class of Haken $n$--manifolds as a generalisation of Haken 3--manifolds. We prove that the interior of the universal covering of a Haken $n$--manifold is $\RR^n$, which generalises a result of Waldhausen. The techniques used allow us to provide a new proof of Waldhausen's universal cover theorem for Haken 3--manifolds.
\end{abstract}

\maketitle

\section{Introduction}\label{Introduction}
In the final section of Waldhausen's classic paper on Haken 3--manifolds \cite{Wa2}, he proves that the interior of the universal covering space of a Haken 3--manifold is $\RR^3$. A direct generalisation of his proof for Haken $n$--manifolds leads to some difficulties, which are discussed in Foozwell \cite{Fooz}. In this paper, we give a proof of a Waldhausen universal covering theorem in all dimensions, via induction on dimension on the manifold. Surprisingly, the approach in dimension three needs to be different, but we do obtain a new proof of the three-dimensional case that is similar in spirit to the higher dimensional proof.

In section \ref{Preliminary definitions}, we give the basic definitions needed to define Haken $n$--manifolds. The definition is more complicated than the definition for 3--manifolds, and we use the boundary-pattern concept developed by Johannson.

In section \ref{The main result}, we present the proof of the main theorem for higher dimensional Haken manifolds, assuming that the result is true in dimension three.

In section \ref{The three-dimensional case}, we give the proof of the main theorem in the three-dimensional case. We assume the result is true in dimension two, but this can be proved easily using the techniques from section \ref{The main result}, or just using the classical argument.

\section{Preliminary definitions}\label{Preliminary definitions}
\begin{definition}
Let $M$ be a compact $n$--manifold with boundary, and let $\ul m$ be a finite collection of compact, connected $\left(n-1\right)$--manifolds in $\bd M$. Let $i \in \{1, \ldots, n+1\}$. If the intersection of each collection $i$ elements of $\ul m$ is either empty or consists of $\left(n - i\right)$--manifolds, then $\ul m$ is called a {\emph{boundary-pattern}} for $M$.
\end{definition}
Such a manifold is called a {\emph{manifold with boundary-pattern}}. We use the notation $\left(M, \ul m \right)$ when we wish to emphasise that $M$ is a manifold with boundary-pattern. The elements of $ \ul m$ are called \emph{faces} of the boundary-pattern.

We say that a boundary-pattern is {\emph{complete}} if $\bd M = \bigcup \left\{A:A \in \ul m\right\}$. If the boundary-pattern is not complete, a complete boundary-pattern can be formed by including the components of $ \closure \left(\bd M \setminus \bigcup\{A:A \in \ul m \}\right)$ together with the elements of $\ul m$. This complete boundary-pattern is called the {\emph{completion}} of $\ul m$ and is denoted $\overline {\ul m}$.

The {\emph{intersection complex}} $K = K \left(\ul m\right)$ of a manifold with boundary-pattern is
\[
K = \bigcup_{A \in \ul m} \bd A.
\]
The intersection complex of a $3$--manifold with boundary-pattern is a graph with vertices of degree three.

\begin{definition}
Suppose that $\left(M, \ul m\right)$ and $\left(N , \ul n\right)$ are manifolds with boundary-patterns. An {\emph{admissible map}} between $M$ and $N$ is a continuous proper map $f  \co M \rightarrow N$ such that
\[
 \ul m = \bigsqcup_{A \in \ul n} \{B  :     B \mbox{ is a component of } f^{-1}\left(A\right) \}.
\]
Furthermore, $f$ must be transverse to the boundary-patterns.
\end{definition}

Consider a disk with complete boundary-pattern consisting of $i$ components. If $i=1$, then such a disk has zero vertices, and we call such a disk a \emph{zerogon}. For $i \ge 2$, such a disk has $i$ vertices. A \emph{bigon} has two vertices and a \emph{triangle} has three vertices. Collectively, zerogons, bigons and triangles are called \emph{small disks}\footnote{Monogons are disks with one vertex in the boundary. However, these do not play a role here, because they are not manifolds with boundary-pattern. Zerogons were called monogons in Foozwell \cite{Fooz}.}.

\begin{definition}\label{usefulbpttn}
Let $K$ be the intersection complex of an $n$--manifold $\left(M, \ul m\right)$. Suppose that for each admissible map $f  \co \left(\Delta, \ul \delta \right) \rightarrow \left(M, \ul m\right)$ of a small disk, there is a map $g \co \Delta \rightarrow \bd M$, homotopic to $f$ rel $\bd \Delta$, such that
$g^{-1}\left(K\right)$ is the cone on $g^{-1}\left(K\right) \cap \bd \Delta$. Then the boundary-pattern $\ul m$ of $M$ is called a {\emph{useful boundary-pattern}}.
\end{definition}

\begin{definition}\label{esslcurve}
Let $(J, \ul j)$ be a compact one-dimensional manifold with boundary-pattern and let $(M, \ul m)$ be an $n$--dimensional manifold with boundary-pattern. An admissible map $\sigma  \co (J, \ul j) \rightarrow (M, \ul m)$ is an {\emph{inessential}} curve if there is a disk $(\Delta, \ul \delta)$ and an admissible map $g  \co (\Delta, \ul \delta) \rightarrow (M, \ul m)$ such that:
\begin{enumerate}
    \item $J = \closure \left(\bd \Delta \setminus \bigcup \{ A: A \in \ul \delta \} \right)$,
    \item the completion $\left(\Delta, \overline{\ul \delta}\right)$ is a small disk,
    \item $g \vert_J = \sigma$.
\end{enumerate}
Otherwise $\sigma$ is called an {\emph{essential}} curve.
\end{definition}
\begin{definition}\label{essential map}
An admissible map $\varphi  \co (F, \ul f) \rightarrow (M, \ul m)$ is called {\emph{essential}} if for each essential curve $\sigma \co (J, \ul j) \rightarrow (F, \ul f)$ the composition $\varphi \circ \sigma  \co (J, \ul j) \rightarrow (M, \ul m)$ is also essential. In particular, an essential submanifold $F$ of $M$ is a submanifold such that the inclusion map is essential.
\end{definition}

\begin{definition}\label{haken cell}
 A \emph{Haken $1$--cell} is an arc with complete (and useful) boundary-pattern. For $n > 1$, if $(M, \ul m)$ is an $n$--cell with complete and useful boundary-pattern and each face $A \in \ul m$ is a Haken $(n-1)$--cell, then $(M, \ul m)$ is a \emph{Haken $n$--cell}.
\end{definition}
Thus a Haken 1--cell is of the form $([a,b], \{a, b\})$ for $a,b \in \RR$. A Haken 2--cell is a disk with at least four sides.
\begin{definition}\label{induced}
Let $(M, \ul m)$ be a compact $n$--manifold with boundary-pattern. Let $F$ be a codimension-one, properly embedded, two-sided, essential submanifold of $M$.
A boundary-pattern is {\emph{induced}} on $F$ if it is obtained by taking all the intersections of $\partial F$ with the elements of the boundary pattern $\ul m$ on $M$. Equivalently, the boundary-pattern $\ul f$ is the induced pattern if the inclusion $(F, \ul f) \to (M, \ul m)$ is an admissible map.
\end{definition}
\begin{definition}\label{good pair}
Let $(M, \ul m)$ be a compact $n$--manifold with complete and useful boundary-pattern. Let $F$ be a codimension-one, properly embedded, two-sided, essential submanifold of $M$ whose boundary-pattern is induced from the boundary pattern on $M$ and is complete and useful.  Suppose that $F$ is not admissibly boundary-parallel. Then the pair $(M, F)$ is called a {\emph{good  pair}}.
\end{definition}
Suppose that $(M, \ul m)$ is a manifold with boundary-pattern and that $(F, \ul f)$ is a codimension-one submanifold. Let $N$ be the manifold obtained by splitting $M$ open along $F$. There is an obvious map $q \co N \to M$ that glues parts of the boundary of $N$ together to regain $M$. We define a boundary-pattern $\ul n$ on $N$ by
\[
B \in \ul n \text{ if and only if $B$ is a component of } q^{-1}(A)
\]
for $A \in \ul m$ or $A = F$. This is the boundary-pattern that we will use whenever splitting situations arise.

\begin{definition}
A finite sequence
\[ (M_0 , F_0), (M_1, F_1), \dots, (M_k, F_k) \]
of good pairs is called a {\emph{hierarchy of length $k$}} for $M_0$ if the following conditions are satisfied:
\begin{enumerate}
    \item $M_{i+1}$ is obtained by splitting $M_i$ open along $F_i$ and,
    \item $M_{k+1}$ is a finite disjoint union of Haken $n$--cells.
\end{enumerate}
A manifold with a hierarchy is called a {\emph{Haken $n$--manifold}}. A Haken $n$--cell is a Haken manifold with a hierarchy of length zero.
\end{definition}

We regard two Haken $n$--manifolds $M$ and $N$ as equivalent if there is an admissible homeomorphism $\varphi \co \left(M, \ul{m}\right) \to \left(N, \ul{n} \right)$.

\section{The main result}\label{The main result}

In proving our main theorem, we will use the following result of Doyle \cite{Do}.
\begin{theorem}\label{Doyle}
If $P$ is a manifold with interior homeomorphic to $\RR^n$ and boundary homeomorphic to $\RR^{n-1}$, then $P$ is homeomorphic to $\RR^{n-1} \times [0, \infty)$, provided $n \ne 3$.
\end{theorem}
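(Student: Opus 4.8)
\emph{Proof strategy.} The plan is to dispose of the low dimensions by hand and to reduce the remaining cases to a recognition theorem for Euclidean half-space. For $n=1$ the statement is immediate, since a connected $1$--manifold with non-compact interior and a single boundary point is $[0,\infty)$; for $n=2$ it follows from the classification of non-compact surfaces, a contractible surface with non-compact boundary being the half-plane. So the substantive range is $n\ge 4$, where I would invoke the relative form of Siebenmann's characterisation of $\RR^n$: for $n\ge 6$, a manifold $P^n$ is homeomorphic to $\bd P\times[0,\infty)$ once $P$ is contractible, $\bd P$ is collared, and $P$ has a single tame end that is $1$--connected at infinity.

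First I would fix a collar $c\co\bd P\times[0,1)\hookrightarrow P$ and push along it to produce a homotopy equivalence $P\simeq\inte P=\RR^n$, so that $P$ is contractible. Next I would describe a neighbourhood of infinity $E$ in $P$: outside a large compact set it is the union of a neighbourhood of infinity $E_0\subset\inte P\cong\RR^n$ with a neighbourhood of infinity $E_1\cong(\RR^{n-1}\setminus B^{n-1})\times[0,1)$ lying in the collar, the two being glued along part of $c(\bd P\times(0,1))$. The set $E_0$ is simply connected and the inclusion of the overlap into $E_1$ is a homotopy equivalence, so a van Kampen argument shows $E$ is simply connected; the same decomposition shows $P$ has exactly one end and that its neighbourhoods of infinity are finitely dominated, hence the end is tame. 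Feeding $(P,\bd P)$ into the product/collaring theorem then yields $P\cong\bd P\times[0,\infty)\cong\RR^{n-1}\times[0,\infty)$. The cases $n=4$ and $n=5$ run along the same lines, using the $4$--dimensional topological methods of Freedman and the end theorem in dimension $5$; dimension $n=3$ falls outside the reach of these handle-theoretic arguments, which is the source of the hypothesis $n\ne 3$.

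I expect the crux to be the end analysis — verifying that the end of $P$ is tame and $1$--connected at infinity — because the end of $P$ is an amalgam of the familiar end of $\RR^n$ with the end of the boundary collar $\RR^{n-1}\times[0,1)$, and one must check that this gluing introduces neither fundamental group nor wildness at infinity. Once those end invariants are under control, identifying $P$ with $\RR^{n-1}\times[0,\infty)$ is a direct application of the engulfing and handle-theoretic recognition of Euclidean half-space.
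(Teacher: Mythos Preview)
The paper does not prove this theorem; it is quoted as a result of Doyle~\cite{Do} and used as a black box in the proof of the main theorem in Section~\ref{The main result}. There is therefore no ``paper's own proof'' to compare your proposal against, and nothing further is expected here beyond the citation.

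As a standalone argument your sketch is plausible in outline but both anachronistic and loose at the key step. The collaring/product theorems from Siebenmann's end theory that you invoke are formulated for manifolds with \emph{compact} (or empty) boundary; here $\partial P\cong\RR^{n-1}$ is open and contributes its own end, so the sentence ``$P\cong\partial P\times[0,\infty)$ once $P$ is contractible, $\partial P$ is collared, and the end is tame and $1$--connected at infinity'' is not a theorem one can cite directly in this setting without further work. Your appeals to Freedman for $n=4$ and to an end theorem in dimension $5$ are gestures at machinery that postdates Doyle's 1964 paper by nearly two decades, and you have not said which statements you mean or why their hypotheses are met. Doyle's own argument is short and elementary by comparison: one attaches an external collar to $\partial P$ to obtain an open manifold homeomorphic to $\inte P\cong\RR^n$ in which $\partial P$ sits as a bicollared copy of $\RR^{n-1}$, and then uses the unknottedness of such hyperplanes for $n\neq 3$ (the failure in dimension three being exactly the Fox--Artin phenomenon the paper alludes to). If you want a self-contained proof rather than a citation, that route is both cleaner and historically accurate.
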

Examples showing that ruling out three-dimensional manifolds is necessary are well known. The paper of Fox and Artin \cite{FA} is a pleasant way to discover such examples.
We will also make use of the following folklore lemma, which in essence is the idea in Stallings \cite{St2}.
\begin{lemma}
Let $P$ be an $n$--manifold that can be written as a countable union of compact subsets. Suppose that for each compact subset $X$ in $P$, there is an embedding $f$ of the standard open ball $B$ into $P$ such that $X \subset f(B)$. Then $P$ is homeomorphic to $\RR^n$.
\end{lemma}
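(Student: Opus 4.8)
The plan is to realise $P$ as a nested (monotone) union of open $n$--cells and then apply the theorem of M.\ Brown that a monotone union of open $n$--cells is an open $n$--cell, hence homeomorphic to $\RR^n$. First I would observe that the hypotheses force $\bd P = \emptyset$: if $x \in \bd P$, then applying the hypothesis to the compact set $X = \{x\}$ gives an embedding $f$ of the open ball $B$ into $P$ with $x \in f(B)$, and a standard application of invariance of domain in a half-space chart at $x$ yields a contradiction. In particular, invariance of domain also shows that for every embedding $f \co B \to P$ the image $f(B)$ is open in $P$.

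Next, write $P = \bigcup_{k \ge 1} C_k$ with each $C_k$ compact, and set $D_k = C_1 \cup \dots \cup C_k$, so that $D_1 \subset D_2 \subset \cdots$ is a compact exhaustion of $P$. I would construct, by induction on $k$, embeddings $h_k \co B \to P$ so that, writing $W_k = h_k(B)$ and $W_k' = h_k\!\left(\frac12 B\right)$ for the concentric ``half--cell'',
\[
 D_k \cup \closure\!\left(W_{k-1}'\right) \subset W_k' \qquad (k \ge 1,\ W_0' = \emptyset).
\]
For the inductive step, the set $K = D_k \cup \closure(W_{k-1}')$ is compact (as $\closure(W_{k-1}') \subseteq h_{k-1}(\closure(\frac12 B))$), so the hypothesis yields an embedding $f \co B \to P$ with $K \subset f(B)$; then $f^{-1}(K)$ is a compact subset of $B$, hence lies inside $\closure(rB)$ for some $r < 1$, and precomposing $f$ with a self-homeomorphism of $B$ carrying $\frac12 B$ onto the concentric ball of some radius strictly between $r$ and $1$ produces the required $h_k$ with $K \subset W_k'$. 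The base case $k = 1$ is the same argument with $K = D_1$.

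Now each $W_k'$ is an open $n$--cell, the relations $\closure(W_{k-1}') \subset W_k'$ give $W_1' \subset W_2' \subset \cdots$, and $\bigcup_k W_k' \supset \bigcup_k D_k = P$, so $P = \bigcup_k W_k'$ is a monotone union of open $n$--cells. Brown's theorem then gives $P \cong \RR^n$.

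The only genuine step is the passage from ``every compact set lies in \emph{some} embedded ball'' to ``$P$ is a \emph{nested} union of embedded balls''; the half--cell device above is exactly what keeps the relevant closures compact, so that the hypothesis can be fed back into itself. I expect the substantive ingredient to be the appeal to Brown's monotone union theorem --- a genuinely nontrivial statement, which is why this lemma counts as ``folklore'' rather than elementary --- with the remaining bookkeeping being routine.
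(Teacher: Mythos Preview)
Your argument is correct. The paper does not actually supply a proof of this lemma: it is stated as folklore with only a reference to Stallings's engulfing paper \cite{St2} for the underlying idea, so there is nothing in the text to compare your proof against line by line.

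Your route via Brown's monotone union theorem is the standard one, and your bookkeeping is clean. The observation that $\bd P = \varnothing$ via invariance of domain is a sensible preliminary (and is needed so that each $f(B)$ is open in $P$), and the half--cell trick --- replacing $h_k(B)$ by $h_k(\tfrac12 B)$ so that closures stay compact and the hypothesis can be reapplied --- is exactly what is required to upgrade ``every compact set lies in \emph{some} open cell'' to ``$P$ is an \emph{increasing} union of open cells''. One small wording point: your inclusion $\closure(W_{k-1}') \subseteq h_{k-1}(\closure(\tfrac12 B))$ is in fact an equality, since $h_{k-1}(\closure(\tfrac12 B))$ is compact and hence closed in $P$; either way the compactness of $K$ follows. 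The paper's pointer to Stallings reflects that the same engulfing philosophy underlies both arguments; Brown's theorem packages the endgame most efficiently, and your invocation of it is appropriate.
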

\begin{theorem}
Let $\cov M$ be the universal cover of a Haken $n$--manifold $M$. Then the interior of $\cov M$ is homeomorphic to $\RR^n$.
\end{theorem}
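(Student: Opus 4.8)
The plan is to induct on the length $k$ of a hierarchy for $M$, with the base case being a Haken $n$--cell. If $M_0 = M$ is a Haken $n$--cell, its interior is already an open $n$--ball, hence $\RR^n$, and the universal cover is trivial; so the base case is immediate. For the inductive step, suppose the theorem holds for all Haken $n$--manifolds admitting a hierarchy of length $< k$, and let $M$ have a hierarchy $(M_0,F_0),\dots,(M_k,F_k)$ of length $k$. Splitting $M = M_0$ open along $F_0$ yields $M_1$, which carries a hierarchy of length $k-1$; by induction the interior of $\widetilde{M_1}$ is $\RR^n$. I would then reconstruct $\widetilde M$ from copies of $\widetilde{M_1}$ (or of the cover of $M_1$ corresponding to $\pi_1 M_1 \hookrightarrow \pi_1 M$) glued along lifts of the splitting surface $F_0$, arranged in a tree pattern indexed by the Bass--Serre tree (or the analogous tree when $F_0$ is nonseparating) of the splitting of $\pi_1 M$ determined by $F_0$.

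The key geometric input is understanding a single lift $\widetilde F$ of $F_0$ inside $\widetilde{M_1}$ and inside $\widetilde M$. Here is where essentiality of $F_0$ and usefulness of the boundary-patterns are used: $F_0$ being essential (Definitions~\ref{essential map},~\ref{good pair}) should force its preimage in the universal cover to be a disjoint union of properly embedded copies of $\widetilde{F_0}$, each separating, with the induced hierarchy on $F_0$ (of length $\le k$, but in dimension $n-1$) so that the $(n-1)$--dimensional case of the theorem applies: the interior of $\widetilde{F_0}$ is $\RR^{n-1}$. I would use Theorem~\ref{Doyle} to control the product structure near such a lift: a regular neighbourhood of a lift $\widetilde F$ in $\widetilde M$ has interior $\RR^n$ and its frontier is a copy of $\RR^{n-1}$ (being $\widetilde F$ with a collar), so by Doyle's theorem it is a half-space $\RR^{n-1}\times[0,\infty)$ — this is exactly why the hypothesis $n\ne 3$ matters and why dimension three needs the separate treatment in section~\ref{The three-dimensional case}.

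With these pieces in place, the exhaustion argument runs as follows. Given a compact set $X \subset \mathrm{Interior}(\widetilde M)$, it meets only finitely many of the pieces (copies of $\widetilde{M_1}$) and finitely many of the splitting walls, i.e.\ it lies in a finite connected subtree's worth of pieces. I would build an open ball containing $X$ by induction on the number of pieces: starting from a ball in one piece (available since that piece has interior $\RR^n$), repeatedly absorb an adjacent piece across a shared wall. Absorbing one piece amounts to gluing two sets, each homeomorphic to $\RR^n$ or to a half-space $\RR^{n-1}\times[0,\infty)$, along a common face that is a half-space or all of $\RR^{n-1}$ — and such a union is again an open ball, by a direct application of Doyle's theorem together with standard engulfing/collar arguments. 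Since $\widetilde M$ is a countable union of compact sets (it is a manifold covering a compact manifold), the folklore lemma then gives $\mathrm{Interior}(\widetilde M)\cong\RR^n$, completing the induction.

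The main obstacle I anticipate is the gluing/engulfing step: making precise that two half-spaces (or a half-space and a full $\RR^n$) glued along a planar face of the boundary yield an open ball, uniformly enough that the absorption can be iterated over a growing finite subtree while keeping $X$ inside. This requires care with collars, with the possibility that $F_0$ is nonseparating (so the "tree" is really the tree of the HNN or amalgam, and a single piece can be glued to itself along two different walls), and with ensuring all the identifications are compatible with the boundary-patterns so that the induced-hierarchy bookkeeping and the $(n-1)$--dimensional inductive hypothesis genuinely apply to each wall. Everything hinges on $n\ne 3$ for Doyle's theorem; the parallel but genuinely different argument for $n=3$ is deferred to section~\ref{The three-dimensional case}.
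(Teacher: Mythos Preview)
Your proposal is essentially the same approach as the paper's: induct on hierarchy length, decompose $\widetilde M$ into copies of $\widetilde{M_1}$ glued along lifts of $F_0$ (whose interiors are $\RR^{n-1}$ by the lower-dimensional case), apply Doyle's theorem to see that a piece together with one wall is a half-space $\RR^{n-1}\times[0,\infty)$, absorb pieces one at a time so that each growing union has interior $\RR^n$, and conclude via the folklore engulfing lemma. The paper streamlines the absorption by arranging the pieces in a linear exhaustion $N^1 \subset N^2 \subset \cdots$ (with $N^{i+1} = N^i \cup N_{i+1}$ meeting along a single wall $F_i$) rather than invoking finite subtrees of a Bass--Serre tree, and it makes the key gluing step explicit by writing $\inte(N^{i+1}) = P \cup Q$ with $Q$ viewed as an external collar of $\bd P$, but the content is identical.
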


\begin{proof}
We have only defined boundary-patterns for compact manifolds. We will extend the definition to non-compact manifolds, in the case that we have a covering space. If $p \co \cov M \to M$ is a covering map of a manifold with boundary-pattern $(M, \ul m)$, then we define a boundary pattern $p^{-1}(\ul m)$ for $\cov M$ by
\[
p^{-1}(\ul m) = \bigsqcup_{A \in \ul m} \{ B : B \text{ is a component of } p^{-1}(A) \}.
\]

Let the sequence
\[
(M_0, F_0), (M_1,F_1), \dots, (M_k, F_k)
\]
be a hierarchy for $M_0 = M$. To simplify notation let $N = M_1$ and $F = F_0$. We assume that the interior of the universal cover $\cov N$ of $N$ is homeomorphic to $\RR^n$. If $p \co \cov M \to M$ is the covering projection, then the closure of each component of $\cov M \setminus p^{-1}(F)$ is homeomorphic to $\cov N$. There are a countable collection of such pieces and we label them $\{N_1, N_2, N_3, \dots \}$.

Assume that each component of $p^{-1}(F)$ has interior homeomorphic to $\RR^{n-1}$. There are countably many pieces of $p^{-1}(F)$, which we label as $\{F_1, F_2, F_3, \dots \}$.
We arrange the labelling so that $N^1 = N_1$,
\[
N^i \cap N_{i+1} = F_i,
\]
and
\[
N^{i+1} = N^i \cup N_{i+1}.
\]
We first aim to prove that $\inte(N^i) \cong \RR^n$ for each $i \ge 1$. First observe that, by assumption, $\inte(N^1) \cong \RR^n$. We assume that $\inte(N^j) \cong \RR^n$ is true for all $j \le i$ and then prove that $\inte(N^{i+1}) \cong \RR^n$. Recall that $N^{i+1} = N^i \cup N_{i+1}$. Let $P = \inte(N^i) \cup \inte(F_i)$ and let $Q = \inte(F_i) \cup \inte(N_{i+1})$. Both $P$ and $Q$ are manifolds with interior homeomorphic to $\RR^n$ and boundary homeomorphic to $\RR^{n-1}$. By Doyle's theorem \ref{Doyle}, it follows that each of $P$ and $Q$ are homeomorphic to $\RR^{n-1} \times [0,\infty)$, provided we assume that $n > 3$.

Let us regard $P \cup Q$ as being formed by attaching a collar $Q =  \RR^{n-1} \times [0,\infty)$  of the boundary of $P$ to $\bd P$. Thus $P \cup Q \cong \inte(P) \cong \RR^n$. So $\inte(N^{i+1}) \cong \RR^n$ as we aimed to prove.

Let $X$ be a compact subset of $\inte(\cov M)$. Then $X \subset N^i$ for some integer $i$.  Since $\inte(N^i) \cong \RR^n$, it follows that there is an open ball in $\inte(N^i)$ containing $X$. Hence, there is an open ball in $\inte(\cov M)$ containing $X$, which shows that $\inte(\cov M) \cong \RR^n$.
\end{proof}
\section{The three-dimensional case}\label{The three-dimensional case}
We use the following theorem of Doyle and Hocking \cite{DoHo} in this section.
\begin{theorem}\label{Doyle-Hocking}
Let $M$ be a 3--manifold such that $\inte(M) \cong \RR^3$ and $\bd M \cong \RR^2$. Suppose that $M \ne \RR^2 \times [0, \infty)$. Then there is a polygonal graph $\Gamma \subset M$ such that $\Gamma \cap M$ is a point $x$ and there is no closed 3-cell $C$ in $M$ containing $\Gamma$ for which $\Gamma \setminus \{x\} \subset \inte(C)$.
\end{theorem}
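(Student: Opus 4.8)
The plan is to prove the contrapositive. So we assume the \emph{engulfing hypothesis}: every polygonal graph $\Gamma \subset M$ that meets $\bd M$ in a single point $x$ is contained in a closed $3$--cell $C \subset M$ with $\Gamma \setminus \{x\} \subset \inte(C)$, and from this we deduce $M \cong \RR^2 \times [0, \infty)$. The main tool will be a boundary version of Brown's monotone-union theorem: if $M = \bigcup_{i \ge 1} C_i$ is an increasing union of closed $3$--cells, each $C_i$ meeting $\bd M$ in a closed $2$--cell $D_i$ so that $(C_i, D_i)$ is homeomorphic to a standard half-ball together with its flat face, with $D_i$ contained in the interior of $D_{i+1}$ in $\bd M$, with $C_i \setminus D_i$ contained in both $\inte(M)$ and the interior of $C_{i+1}$, and with $\bigcup_i D_i = \bd M$, then $M \cong \RR^2 \times [0, \infty)$. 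This is proved exactly as in the absolute case, by building homeomorphisms of the successive half-ball shells onto product slabs that agree on the common faces $D_i$ and taking the union. So everything reduces to constructing such a sequence $\{C_i\}$.

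Fix a collar $\bd M \times [0, 1] \hookrightarrow M$ (boundaries of manifolds are collared), an exhaustion $X_1 \subset X_2 \subset \cdots$ of $M$ by compact sets, and an exhaustion $E_1 \subset E_2 \subset \cdots$ of $\bd M \cong \RR^2$ by closed $2$--cells with each $E_i$ interior to $E_{i+1}$. Take $C_1$ to be a small half-ball inside the collar about a point of $\bd M$. Inductively, given $(C_i, D_i)$, we seek $(C_{i+1}, D_{i+1})$ with $C_i \cup X_i$ in the relative interior of $C_{i+1}$ and $D_i \cup E_i \subset D_{i+1}$. The interior part is routine: since $\inte(M) \cong \RR^3$, every compact subset of $\inte(M)$ lies in an open ball in $\inte(M)$, so we may pick an open ball $B \subset \inte(M)$ containing $(C_i \cup X_i) \cap \inte(M)$ together with a copy $E_i'$ of $E_i$ pushed slightly off $\bd M$ along the collar.

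The real work is to enlarge all of this out to $\bd M$ over the prescribed disk. Choose a compact polyhedral neighborhood $U$ of $\overline{B} \cup C_i \cup X_i \cup E_i'$ which, using the collar, touches $\bd M$ only near a single point $x$, and choose a polygonal graph $\Gamma$ which is a spine of $U$ with $\Gamma \cap \bd M = \{x\}$. The engulfing hypothesis yields a closed $3$--cell $C \supset \Gamma$ with $\Gamma \setminus \{x\} \subset \inte(C)$. A PL regular-neighborhood and general-position argument, using that $U$ collapses to $\Gamma$ and $\Gamma \subset C$, then lets us absorb $U$ --- hence $C_i \cup X_i$ --- into the interior of $C$ after a small ambient isotopy supported near $\Gamma$; and since $\Gamma \cap \bd M = \{x\}$, while we may push $E_i'$ back down the collar onto $E_i$ and arrange $C$ to respect the collar near $\bd M$, we can take $C \cap \bd M$ to be a closed $2$--cell $D_{i+1} \supset D_i \cup E_i$ with $(C, D_{i+1})$ a standard half-ball pair. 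A final slight shrinking gives $(C_{i+1}, D_{i+1})$ with $C_i$ in its relative interior. Iterating produces the sequence of the previous paragraph, and the monotone-union theorem completes the contrapositive.

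The step I expect to be the main obstacle is this last upgrade, from ``the graph $\Gamma$ is engulfed'' to ``the genuine set $C_i \cup X_i$ is engulfed by a cell meeting $\bd M$ in a controlled $2$--cell.'' Two things need care. First, $\Gamma$ must actually carry $U$, i.e.\ $U$ must collapse to $\Gamma$, so that $\Gamma \subset C$ can be promoted to $U \subset C$ up to isotopy; this is why the hypothesis has to provide a graph rather than a single arc. Second, and more delicate: the hypothesis controls $C$ only away from $x$, so a priori $C$ could meet $\bd M$ in a wild set near $x$. This is where the fixed collar is essential --- the construction must be arranged so that $C$ can be taken compatible with the collar structure near $\bd M$, forcing $C \cap \bd M$ to be a tame $2$--cell containing the prescribed boundary disk. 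Getting this boundary bookkeeping right, and checking that the half-ball-pair condition and the nesting survive at every stage, is where the content lies; the interior engulfing and the monotone-union argument are comparatively routine.
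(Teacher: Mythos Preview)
The paper does not prove this theorem. It is quoted as an external result of Doyle and Hocking \cite{DoHo} and then used as a black box in the proof of the three-dimensional covering theorem; no argument for it is supplied anywhere in the paper. Consequently there is no ``paper's own proof'' against which to compare your proposal.

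That said, your contrapositive strategy --- assume every boundary-touching polygonal graph is engulfable and deduce $M \cong \RR^2 \times [0,\infty)$ via a half-space monotone-union argument --- is the natural shape of the Doyle--Hocking argument and is broadly correct in outline. Your own diagnosis of the delicate point is accurate: the passage from ``$\Gamma$ lies in a $3$--cell $C$'' to ``a prescribed compact set lies in a $3$--cell meeting $\bd M$ in a tame $2$--cell containing a prescribed disk'' is where the content is, and the hypothesis as stated gives no control over $C \cap \bd M$ beyond the single point $x$. Your sketch waves at fixing this with the collar, but as written it is not a proof: you have not explained why $C$ can be taken to respect the collar near $\bd M$, nor why the resulting $C \cap \bd M$ can be forced to contain $D_i \cup E_i$ rather than just a small disk about $x$. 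If you want to turn this into an honest proof you should consult the original Doyle--Hocking paper for how they handle exactly this step; within the present paper there is nothing to check your work against.
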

We use theorem \ref{Doyle-Hocking} as follows: suppose $Y$ is a manifold with interior homeomorphic to $\RR^3$ and boundary homeomorphic to $\RR^2$, such that every graph $\Gamma \subset Y$ that meets $\bd Y$ in a point can be contained in a ball that meets $\bd Y$ in a disk, then $Y$ is homeomorphic to $\RR^2 \times [0, \infty)$. We will refer to such a ball as an \emph{engulfing ball} for $\Gamma$.

\begin{theorem}
Let $M$ be an orientable Haken 3--manifold. Then the interior of the universal cover $\cov M$ of $M$ is homeomorphic to $\RR^n$.
\end{theorem}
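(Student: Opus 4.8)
The plan is to mimic the higher-dimensional argument, replacing Doyle's theorem by the Doyle--Hocking theorem \ref{Doyle-Hocking} wherever the dimension-three obstruction bites. As before, fix a hierarchy $(M_0,F_0),\dots,(M_k,F_k)$ for $M_0=M$, set $N=M_1$ and $F=F_0$, and assume inductively that $\inte(\cov N)\cong\RR^3$ and that each component of $p^{-1}(F)$ has interior $\RR^2$ (the base case being a Haken $3$--cell, whose universal cover is itself, with interior $\RR^3$; and the two-dimensional input is handled by the remark in section \ref{The three-dimensional case}). Write $\inte(\cov M)\setminus p^{-1}(F)$ as a countable union of copies of $\inte(\cov N)$, organised exactly as in the proof of the higher-dimensional theorem so that $N^{i+1}=N^i\cup N_{i+1}$ with $N^i\cap N_{i+1}=F_i$. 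The goal is again to show $\inte(N^i)\cong\RR^3$ for every $i$, and then invoke the Stallings-type engulfing lemma to conclude $\inte(\cov M)\cong\RR^3$.

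The inductive step is where dimension three forces a detour. We cannot simply apply Doyle's theorem to $P=\inte(N^i)\cup\inte(F_i)$ and $Q=\inte(F_i)\cup\inte(N_{i+1})$ to conclude they are half-spaces, because Fox--Artin type phenomena are exactly what Doyle's theorem must exclude and what \ref{Doyle-Hocking} is designed to detect. Instead, I would verify the \emph{engulfing hypothesis} of the Doyle--Hocking criterion directly for $P$ (and symmetrically $Q$): given a polygonal graph $\Gamma\subset P$ meeting $\bd P=\inte(F_i)$ in a single point $x$, I must produce an engulfing ball $C\subset P$ with $\Gamma\setminus\{x\}\subset\inte(C)$ and $C\cap\bd P$ a disk. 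Since $\Gamma$ is compact it lies in some finite subunion $N^j\cup F_j\cup\cdots$, hence (using that $\Gamma$ touches $\bd P$ only at $x\in\inte(F_i)$) in $\inte(N^i)$ together with a small collar of $F_i$; because $\inte(N^i)\cong\RR^3$ by the induction hypothesis on $i$, $\Gamma\cap\inte(N^i)$ sits inside a ball there, and this ball can be pushed to meet $F_i$ in a disk around $x$ using that $F_i$ is two-sided, properly embedded, and essential (so that a regular neighbourhood of $F_i$ in $N^i$ is a product $F_i\times[0,\varepsilon)$ and the engulfing ball can be chosen compatibly with that product structure). This shows $P\cong\RR^2\times[0,\infty)$, and likewise $Q$.

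Once $P$ and $Q$ are both half-spaces, the geometric gluing step is verbatim the one in section \ref{The main result}: regard $Q\cong\RR^2\times[0,\infty)$ as a collar attached to $\bd P$, so that $N^{i+1}\supset P\cup Q\cong\inte(P)\cong\RR^3$, whence $\inte(N^{i+1})\cong\RR^3$. Running the induction on $i$ and then the engulfing lemma on an exhaustion of $\inte(\cov M)$ by compacta finishes the proof. Two points I expect to require care. First, the orientability hypothesis: it is presumably needed to keep each $F_i$ two-sided in $\cov M$ and to ensure the splitting pieces $N_j$ really are copies of $\cov N$ rather than of some twisted double cover; I would state this explicitly where the pieces are enumerated. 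Second — and this is the main obstacle — rigorously checking the engulfing condition for $P$. The subtlety is that the engulfing ball produced in $\inte(N^i)$ must be enlarged across $F_i$ in a controlled way, and one has to know that doing so does not accidentally require leaving $P$ (which also contains no part of $N_{i+1}$); handling the single boundary point $x$ where $\Gamma$ meets $F_i$, and arranging $C\cap F_i$ to be a genuine $2$--disk rather than something wilder, is the delicate part and is exactly where the essentialness and product-neighbourhood properties of $F_i$ from the definition of a good pair must be used.
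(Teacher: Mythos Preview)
Your outline has the right overall shape, but the step you flag as ``the main obstacle'' is a genuine gap, not just a detail to be filled in. Knowing only that $\inte(N^i)\cong\RR^3$ gives you a ball $B\subset\inte(N^i)$ containing $\Gamma\setminus\{x\}$, but it gives no control whatsoever over how $B$ sits relative to the boundary face $\inte(F_i)$. The existence of a collar $\inte(F_i)\times[0,\varepsilon)$ is automatic for any boundary and adds nothing: $B$ may meet that collar in an arbitrarily wild set, and there is no mechanism for ``pushing'' $B$ so that $B\cap\inte(F_i)$ becomes a disk. Indeed, the Fox--Artin examples are precisely 3--manifolds with interior $\RR^3$, boundary $\RR^2$, and a collar, which are \emph{not} half-spaces; your hypotheses on $P$ are exactly these, so the engulfing ball cannot be produced from them alone. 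The induction hypothesis ``$\inte(N^i)\cong\RR^3$'' is too weak to propagate.

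The paper's proof repairs this by changing both the hierarchy and the induction hypothesis. It uses the very short hierarchy $(M,F),(N,S),(P,D)$ of Aitchison--Rubinstein, so that the bottom pieces are universal covers of handlebodies, for which one can \emph{explicitly} embed each piece in a closed ball with every face closing up to a disk. The induction then carries not the statement ``interior is $\RR^3$'' but the much stronger \emph{face intersection conditions}: for every compact graph $\Gamma\subset P^i$ meeting finitely many faces, there is a ball $B\subset P^i$ containing $\Gamma$ with $B\cap A$ a disk for each face $A$ that $\Gamma$ touches and $B\cap A=\varnothing$ otherwise. This strengthened hypothesis is exactly what is needed to glue: when $P^{i+1}=P^i\cup_{S_i}P_{i+1}$, the balls $B_1\subset P^i$ and $B_2\subset P_{i+1}$ each meet $S_i$ in a disk, and a small bicollar of a disk in $\inte(S_i)\cong\RR^2$ containing $B_1\cap B_2$ merges them into a single engulfing ball. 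Only after this is established does Doyle--Hocking apply. To salvage your approach you would need to formulate and carry an analogous face-controlled engulfing statement through an arbitrary hierarchy, verifying it directly at the Haken-cell stage; the paper's choice of the very short hierarchy makes the base case tractable.
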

\begin{proof}
Let us first suppose that $M$ is closed. A result of Aitchison and Rubinstein \cite{AR} says that $M$ has a very short hierarchy:
\[
(M,F), (N,S), (P,D),
\]
where $F$ is a maximal collection if closed incompressible surfaces, $S$ is a collection of spanning surfaces, $P$ is a disjoint union of handlebodies and $D$ is a collection of meridian disks in each handlebody.

\begin{lemma} Let $P_i$ be a component of the universal covering space of a component of $P$, and let $E$ be the closed unit ball in $\RR^3$. There is an embedding $e \co P_i \to E$ such that $\inte(E) \subset e(P_i)$, and for each $A \in \cov{\ul{p_i}}$, the closure of $e(A)$ in $\bd E$ is a disk.
\end{lemma}
\begin{proof}
If $P_i$ covers a solid torus, then $P_i$ is homeomorphic to $D^2 \times \RR$, which embeds in the unit ball as $E \setminus \{(0,0,\pm 1)\}$. If $A \in \cov{\ul{p_i}}$, then $e(A)$ is bounded by lines that cover circles in the graph of $\ul p$. Each such line has one end at $(0,0,1)$ and the other at $(0,0,-1)$, so the closure of $e(A)$ in $\bd E$ is a disk.

If $P_i$ covers a handlebody of positive genus, then we can visualise $P_i$ as the regular neighbourhood in $\HH^3$ of a graph in $\HH^2$. Each vertex of the graph has degree four. The graph meets the boundary of $\HH^2$ in a Cantor set. We shall refer to the points in this set as \emph{Cantor points}. The closure of the regular neighbourhood of the graph is a ball. Clearly, there is an embedding $e \co P_i \to E$ into the unit ball. We may choose $e$ so that the Cantor points lie in the equator of $E$. We must show that if $A$ is in the boundary-pattern of $P_i$, then the closure of $e(A)$ in the ball is a disk. Note that if $L$ is a line in the boundary of $e(A)$, then the different ends of $L$ must lie in different Cantor points. Thus, the closure of $e(A)$ is a disk.
\end{proof}
Note that lemma implies that if $T$ is an element of the boundary-pattern of $\cov P$, then $\inte(\cov P) \cup \inte(T)$ is homeomorphic to $\RR^2 \times [0,\infty)$.
\begin{lemma}
Let $\pi \co \cov N \to N$ be the universal covering of a component of $N$, and let $A$ be a component of $\pi^{-1}(\bd N)$. Then $\inte(\cov N) \cup A$ is homeomorphic to $\RR^2 \times [0,\infty)$.
\end{lemma}
\begin{proof}
Let $S_1, S_2, \dots $ denote the components of $\pi^{-1}(S)$. Each $S_i$ is homeomorphic to the universal cover $\cov S$ of $S$. The closure of each component of $\cov N \setminus \pi^{-1}(S)$ is homeomorphic to $\cov P$. We define collections $P_1, P_2, P_3, \dots$ and $P^1, P^2, P^3, \dots$ of submanifolds of $\cov N$ that satisfy the following:
\begin{enumerate}
    \item The collection $\{P_i\}$ covers $\cov N$. That is $\cov N = \bigcup_{i=1}^{\infty} P_i$.
    \item Each $P_i$ is the closure of a component of $\cov N \setminus \pi^{-1}(S)$.
    \item The labelling is arranged so that
    \begin{align*}
        P^1 &= P_1, \\
        P^i \cap P_{i+1} &= S_i, \text{ and }\\
        P_{i+1} &= P^i \cup P_{i+1}.
    \end{align*}
\end{enumerate}
We aim to show that $V := \inte(\cov N) \cup A$ is homeomorphic to $\RR^2 \times [0,\infty)$, where $A$ is a component of $\pi^{-1}(\bd N)$. Let $\Gamma$ be a compact graph in $V$ such that $\Gamma \cap \bd V$ is a point $v$. We will show that there is a ball $B \subset V$ such that $\Gamma \subset B$ and $B \cap \bd V$ is a disk containing $v$.

Since $\Gamma$ is compact, there is some $P^i$ containing $\Gamma$. Therefore, we will show that if $\Gamma \subset P^i$, then there is a ball $B \subset P^i$ such that $\Gamma \subset B$ and $B \cap (A \cap P^i)$ is a disk. We prove this by induction on the index $i$ of the collection $\{P^i\}$. However, we need to prove a stronger statement, and to do this we need to define a boundary-pattern $\ul{p^i}$ inductively for each $P^i$. Since $P^1 = P_1$, we define $\ul{p^1} = \ul{p_1}$. For $i >1$ suppose we have $A_1 \in \ul{p^i}, A_2 \in \ul{p_{i+1}}$ and that $A_1,A_2 \subset \bd P^i$. If $A_1 \cap A_2$ is an arc, the $A_1 \cap A_2 \in \ul{p^{i+1}}$. If $A_1 \cap A_2$ is not an arc, then $A_1$ and $A_2$ belong to distinct elements of $\ul{p^{i+1}}$.

We need to prove that if $\Gamma$ is a compact graph in $P^i$ that has non-empty intersection with finitely many faces of $\ul{p^i}$, then there is a compact ball $B \subset P^i$ containing $\Gamma$ and satisfying the following \emph{face intersection conditions:}
\begin{itemize}
    \item for each $A \in \ul{p^i}$, if $\Gamma \cap A \ne \varnothing$, then $B \cap A$ is a disk,
    \item for each $A \in \ul{p^i}$, if $\Gamma \cap A = \varnothing$, then $B \cap A = \varnothing$.
\end{itemize}
If $\Gamma \subset P^1$, then there is a ball in $P^1$ containing $\Gamma$  and satisfying the face intersection conditions, because $P^1$ is the universal cover of a handlebody. We assume that the result is true for graphs in $P^i$. Let $\Gamma \subset P^{i+1}$. Then $\Gamma_2 = \Gamma \cap P_{i+1}$ is a graph in the universal cover of a handlebody, so there is a ball $B_2 \subset P_{i+1}$ that contains $\Gamma_2$ and satisfies the face intersection conditions. Let $\Gamma_1 = \Gamma \cap P^i$. Then, by assumption, there is a ball $B_1 \subset P^i$ that contains $\Gamma_1$ and satisfies the face intersection conditions. In particular, both $B_1 \cap S_i$ and $B_2 \cap S_i$ are disks.

If $B_1 \cap B_2$ is a disk, then $B_1 \cup B_2$ is the ball we need. If $B_1 \cap B_2$ is not a disk, then we need to modify at least one of $B_1$  or $B_2$.

Observe that $B_1 \cap B_2$ is a compact subset of $\inte(P^1 \cap P_{i+1}) = \inte(S_i) \cong \RR^2$, so there is a disk in $\inte(S_i)$ containing $B_1 \cap B_2$. Let $U$ be a sufficiently small bicollar of this disk. Then $B_1 \cup (U \cap P^i)$ is a ball and so is $B_2 \cup ( U \cap P_{i+1})$.
Now $B_1 \cup B_2 \cup U$ is a ball in $P^{i+1}$ that contains $\Gamma$ and satisfies the face intersection conditions for $P^{i+1}$.

So we have proved: if $\Gamma$  is a compact graph in $P^i$ that has non-empty intersection with finitely many faces of $\ul{p^i}$, then there is a compact ball $B \subset P^i$ that contains $\Gamma$ and satisfies the face intersection conditions. In particular, if $\Gamma \subset V$ such that $\Gamma \cap \bd V$ is a point, then there is a ball $B \subset V$ containing $\Gamma$ such that $B \cap \bd V$ is a disk. Then theorem \ref{Doyle-Hocking} says that $V$ is homeomorphic to $\RR^2 \times [0,\infty)$.
\end{proof}
To show that $\cov M$ is homeomorphic to $\RR^3$, we repeat the above argument. However, the details are less complicated than above, because if $F$ is a closed surface, then its universal cover is $\RR^2$ (rather than  a missing boundary plane).
\end{proof}

\section{Conclusion}\label{Conclusion}
The universal covering space result establishes Haken $n$--manifolds as a special class of spaces worthy of further study. Mike Davis \cite{Da}, for example, has produced examples of aspherical $4$--manifolds with universal covering spaces not homeomorphic to $\RR^n$. We have shown elsewhere \cite{Fooz}, using a direct generalisation of Waldhausen's proof in \cite{Wa1}, that the word problem is solvable for the fundamental group of a Haken $n$--manifold.

Probably the most important open problem at the moment is the question of topological rigidity for Haken $n$--manifolds.
\begin{question}
If $(M, \ul m)$ and $(N, \ul n)$ are Haken $n$--manifolds, that are admissibly homotopy equivalent, are they homeomorphic?
\end{question}
In particular, answering this question in dimension four would be of great interest. The techniques of Waldhausen \cite{Wa2} do not appear to be directly generalisable to the situation in higher dimensions. It seems that a new approach is required.

\section{Acknowledgements} I wish to thank Hyam Rubinstein and James Coffey for many useful ideas and helpful conversations.



\begin{thebibliography}{9}

\bibitem{AR} I. Aitchison and J. H. Rubinstein, \emph{Localising Dehn's lemma and the loop theorem in 3--manifolds}, Proc. Camb. Phil. Soc., 137 (2004),
 281-292.

\bibitem{Da}  M. Davis, \emph{Groups generated by reflections and aspherical manifolds not covered by Euclidean space}, Annals of Math., 117 (1983), 293-324.

\bibitem{Do}  P. H. Doyle, \emph{Certain manifolds  with boundary that are products}, Mich. Math J., 11 (1964), 177-181.

\bibitem{DoHo} P.H. Doyle and J.G. Hocking, \emph{Special $n$--manifolds with boundary}, Proc. Amer. Math. Soc., 16 (1965), 133-135.

\bibitem{Fooz}     B. Foozwell, \emph{Haken $n$--manifolds}, PhD thesis, University of Melbourne (2007).

\bibitem{FoozRubin} B. Foozwell and J.H. Rubinstein, \emph{An introduction to the theory of Haken $n$--manifolds}, to appear in Proceedings of the Conference on Topology and Geometry in Dimension Three: Triangulations, Invariants, and Geometric Structures.

\bibitem{FA} Ralph H. Fox and Emil Artin, \emph{Some wild cells and spheres in three-dimensional space}, Annals of Math. 49 (1948), 979-990.

\bibitem{Jo2}  K. Johannson, \emph{Homotopy equivalences of 3--manifolds with boundary}, Springer Lecture Notes in Mathematics 761 (1976)
Springer-Verlag.

\bibitem{St2} J. Stallings, \emph{The piecewise-linear structure of {E}uclidean space}, Proc. Cambridge Philos. Soc.,
58 (1962), 481--488.

\bibitem{Wa1} F. Waldhausen, \emph{The word problem in fundamental groups of sufficiently large irreducible 3--manifolds}, Annals of Math. 88 (1968), 272-280.

\bibitem{Wa2} F. Waldhausen, \emph{On irreducible 3--manifolds which are sufficiently large}, Annals of Math. 87 (1968), 56-88.

\end{thebibliography}
\end{document}